\newtheorem{theorem}{Theorem}[section]
\newtheorem{lemma}[theorem]{Lemma}
\newtheorem{proposition}[theorem]{Proposition}
\newtheorem{problem}[theorem]{Problem}
\title{Enumerations of Universal Cycles for $k$-Permutations}
\author{Zuling Chang$^{a,b}$, Jie Xue$^{a}$\thanks{Emails:~zuling\_chang@zzu.edu.cn (Z. Chang),~jie\_xue@126.com (J. Xue).}
\\
{\footnotesize $^a$ School of Mathematics and Statistics, Zhengzhou University, 450001 Zhengzhou, China}\\
{\footnotesize $^b$ State Key Laboratory of Information Security, Institute of Information Engineering,}\\
{\footnotesize Chinese Academy of Sciences, 100093 Beijing, China}
}
\date{} 
\begin{document}
\maketitle

\begin{abstract}
Universal cycle for $k$-permutations is a cyclic arrangement in which each $k$-permutation appears exactly once as $k$ consecutive elements. Enumeration problem of universal cycles for $k$-permutations is discussed and one new enumerating method is proposed in this paper. Accurate enumerating formulae are provided when $k=2,3$.

\bigskip

\noindent {\bf Key words:} $k$-permutation; Universal cycle; Eulerian tour; Laplacian matrix; Eigenvalue
\end{abstract}

\section{Introduction}

Given a positive integer $n$, let $[n]=\{1,2,\ldots,n\}$. A {\it $k$-permutation} is an ordered arrangement of $k$ distinct elements in $[n]$, $1\leq k\leq n$.
Let $P_{n,k}$ be the set of all $k$-permutations of the $n$-set $[n]$. Obviously, $|P_{n,k}|=n!/(n-k)!$.
Let $C=(x_{1},x_{2},\ldots,x_{|P_{n,k}|})$ be a cyclic arrangement (or periodic sequence), where each $x_{i}\in [n]$ for $1\leq i\leq |P_{n,k}|$.
If in $C$ each $k$-permutation appears exactly once as $k$ consecutive elements, then we say that $C$ is a {\it universal cycle} for $P_{n,k}$.
For example, if $n=4$ and $k=2$, then $(123413242143)$ is a universal cycle for $P_{4,2}$, as follows.
\[
\begin{matrix}
  1 & 2 & 3 & 4\\
  3 &  &  & 1 \\
    4 &  &  & 3 \\
  1 & 2 & 4 & 2\\
\end{matrix}~~~~~~\leftrightarrows~~~~~~ P_{4,2}=\{12, 13, 14, 21, 23, 24, 31, 32, 34, 41, 42, 43\}
\]


Universal cycles were introduced by Chung, Diaconis, and Graham \cite{Chung1992} as generalizations of de Bruijn cycles \cite{Bruijn46}, which are binary sequences with period $2^n$ that contain every binary $n$-tuple.
Universal cycles are connected with Gray codes deeply \cite{Knuth2005,Sedgewick1977}. In this paper we consider the universal cycles for $k$-permutations. Jackson \cite{Jackson1993} showed that the universal cycle for $k$-permutations always exists when $k<n$. There are lots of results about the construction of universal cycles for $k$-permutations, mainly for the case that $k=n-1$ named {\it shorthand permutations} \cite{Gabric2020,Holroyd2012,Johnson2009,Ruskey2010}.
Another interesting problem is to compute the number of distinct universal cycles for $k$-permutations. This problem was formally presented in \cite{Jackson2009}.

\begin{problem}{\rm(Problem 477 \cite{Jackson2009})}
  How many different universal cycles for $P_{n,k}$ exist?
\end{problem}

Up to now, such enumeration problem has not been solved. When $k=1$, the number of universal cycles for $P_{n, 1}$ is obviously equal to $(n-1)!$. But for $k\geq 2$, counting them is a little complicated. In this paper, we propose one new method to count them and accurate formulae for the number of universal cycles are provided when $k=2$ and $3$. The following two theorems are main results of this paper.

\begin{theorem}\label{the1}
The number of universal cycles for $P_{n,2}$, $n\geq 3$, is equal to
\begin{equation}\label{equ1}
n^{n-2}[(n-2)!]^{n}.
\end{equation}
\end{theorem}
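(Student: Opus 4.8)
The plan is to translate the enumeration into a count of Eulerian circuits and then apply the BEST theorem. First I would introduce the complete symmetric digraph $D_n$ on vertex set $[n]$, having one directed arc $u\to v$ for each ordered pair of distinct elements $u,v\in[n]$; thus $D_n$ has exactly $n(n-1)=|P_{n,2}|$ arcs, and every vertex has in-degree and out-degree $n-1$. Reading a cyclic word $C=(x_1,\dots,x_{n(n-1)})$ as the sequence of arcs $x_1\to x_2\to\cdots\to x_{n(n-1)}\to x_1$ sets up a correspondence: $C$ is a universal cycle for $P_{n,2}$ exactly when this closed walk traverses every arc of $D_n$ once, i.e.\ when it is an Eulerian circuit. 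Since two cyclic words are equal iff one is a rotation of the other, and the vertex word is recoverable from the arc sequence by taking tails, \emph{distinct universal cycles for $P_{n,2}$ correspond bijectively to Eulerian circuits of $D_n$ counted up to cyclic rotation}. Because $n\ge 3$ makes $D_n$ connected with balanced degrees, it is Eulerian, so the count is finite and nonzero.

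Next I would invoke the BEST theorem: for a connected Eulerian digraph $G$ the number of Eulerian circuits, counted up to rotation, equals
\[
\operatorname{ec}(G)=t_w(G)\prod_{v}\bigl(d^{+}(v)-1\bigr)!,
\]
where $d^{+}(v)$ is the out-degree and $t_w(G)$ counts the arborescences directed toward a fixed root $w$; for balanced digraphs $t_w(G)$ is independent of $w$. For $G=D_n$ every out-degree equals $n-1$, so the product factor is immediately $[(n-2)!]^{n}$, which supplies the second factor of \eqref{equ1}. It then remains to show $t_w(D_n)=n^{n-2}$.

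For the arborescence count I would use the directed Matrix-Tree theorem, so $t_w(D_n)=\det L_w$, where $L=D^{+}-A=(n-1)I_n-(J_n-I_n)=nI_n-J_n$ is the out-degree Laplacian, $J_n$ is the all-ones matrix, and $L_w=nI_{n-1}-J_{n-1}$ is obtained by deleting the row and column indexed by $w$. Since $J_{n-1}$ has eigenvalue $n-1$ once and $0$ with multiplicity $n-2$, the matrix $nI_{n-1}-J_{n-1}$ has eigenvalues $1$ (once) and $n$ (with multiplicity $n-2$), so $\det L_w=1\cdot n^{\,n-2}=n^{n-2}$; equivalently this is Cayley's formula, each of the $n^{n-2}$ spanning trees of $K_n$ orienting uniquely toward $w$. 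Combining the two factors gives $\operatorname{ec}(D_n)=n^{n-2}[(n-2)!]^{n}$, which is exactly \eqref{equ1}.

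The step I expect to be most delicate is pinning down the counting convention so that the BEST output matches the intended notion of a "distinct universal cycle": one must verify that identifying cyclic words under rotation matches the up-to-rotation count produced by the theorem, incurring no spurious factor of $n(n-1)$ and, since the arcs are directed, not identifying a cycle with its reverse. Once this bijection is established, the out-degree product and the determinant are routine.
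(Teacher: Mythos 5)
Your proposal is correct and follows essentially the same route as the paper: both identify universal cycles for $P_{n,2}$ with Eulerian circuits of the complete digraph on $[n]$ and apply the BEST theorem with Laplacian $nI-J$, the only cosmetic difference being that you extract the arborescence count $n^{n-2}$ from a deleted-row-and-column determinant while the paper uses the equivalent eigenvalue form $\frac{1}{n}\cdot n^{n-1}$.
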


\begin{theorem}\label{the2}
The number of universal cycles for $P_{n,3}$, $n\geq 4$, is equal to
\begin{equation}\label{equ2}
(n-3)^{\frac{(n-1)(n-2)}{2}}(n-2)^{n-1}(n-1)^{\frac{(n-1)(n-2)}{2}-2}n^{n-2}[(n-3)!]^{n(n-1)}.
\end{equation}
\end{theorem}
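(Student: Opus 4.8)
The plan is to reduce the enumeration to counting Eulerian circuits in a transition digraph, exactly as in the $k=2$ case behind Theorem~\ref{the1}. I define $G_{n,3}$ with vertex set $V$ the $2$-permutations $(a,b)$ (ordered pairs of distinct elements of $[n]$) and an arc $(a,b)\to(b,c)$ for every $3$-permutation $(a,b,c)$. A universal cycle for $P_{n,3}$ is then precisely an Eulerian circuit of $G_{n,3}$ read off as its sequence of arcs; since every arc occurs exactly once, fixing a starting arc selects one representative per cyclic class, so the number of universal cycles equals the Eulerian-circuit count delivered by the BEST theorem. The digraph is $(n-2)$-regular (each vertex has in- and out-degree $n-2$) and strongly connected, so the BEST theorem gives
\[
\#\{\text{universal cycles}\}=\tau(G_{n,3})\cdot\bigl[(n-3)!\bigr]^{n(n-1)},
\]
where $\tau(G_{n,3})$ is the number of spanning arborescences (root-independent in the regular case). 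This already accounts for the factor $[(n-3)!]^{n(n-1)}$ in~\eqref{equ2}, so everything reduces to evaluating $\tau$.

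To compute $\tau$ I would invoke the directed Matrix--Tree Theorem. Since $G_{n,3}$ is $(n-2)$-regular its Laplacian is $L=(n-2)I-A$, where $A$ is the adjacency matrix, and both the row and column sums of $L$ vanish; consequently all principal cofactors of $L$ coincide and
\[
\tau(G_{n,3})=\frac{1}{n(n-1)}\prod_{\mu\neq0}\mu,
\]
the product ranging over the nonzero eigenvalues of $L$. As $L=(n-2)I-A$, this amounts to finding the spectrum of $A$, where $A$ acts on $\mathbb{C}^{V}$ by $(Af)(a,b)=\sum_{c\neq a,b}f(b,c)$.

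The heart of the argument is a decomposition of $\mathbb{C}^{V}$ into three $A$-invariant subspaces. First, the separable space of functions $f(a,b)=\phi(a)+\psi(b)$ (dimension $2n-1$) is invariant; splitting off constants yields the Perron eigenvalue $n-2$, while the mean-zero parts $(\phi_0,\psi_0)$ transform by the $2\times2$ matrix $\bigl(\begin{smallmatrix}0&-1\\ n-2&-1\end{smallmatrix}\bigr)$ tensored with the identity on the $(n-1)$-dimensional mean-zero space. Second and third, on the symmetric and antisymmetric functions with all row sums zero one computes directly
\[
(Af)(a,b)=\sum_{c\neq b}f(b,c)-f(b,a)=-f(b,a),
\]
so $A$ acts as $-1$ on the symmetric part (dimension $n(n-3)/2$) and as $+1$ on the antisymmetric part (dimension $(n-1)(n-2)/2$). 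For $n\geq4$ these three pieces carry pairwise distinct eigenvalues and their dimensions sum to $n(n-1)$, so they exhaust $\mathbb{C}^{V}$.

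Finally I would assemble the Laplacian product. The $2\times2$ block need never be diagonalized: its eigenvalues $\mu_\pm$ satisfy $\mu_++\mu_-=-1$ and $\mu_+\mu_-=n-2$, whence the corresponding Laplacian factor is $(n-2-\mu_+)(n-2-\mu_-)=(n-2)^2+2(n-2)=n(n-2)$, with multiplicity $n-1$. Collecting the contributions $n(n-2)$ (multiplicity $n-1$), $n-1$ (from eigenvalue $-1$, multiplicity $n(n-3)/2$), and $n-3$ (from eigenvalue $+1$, multiplicity $(n-1)(n-2)/2$), then dividing by $n(n-1)$, gives $\tau(G_{n,3})$; substituting into the BEST formula and using $n(n-3)/2-1=(n-1)(n-2)/2-2$ to rewrite the exponent of $n-1$ reproduces~\eqref{equ2} exactly. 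The main obstacle is this spectral step: identifying the correct invariant subspaces and recognizing that the non-symmetric $2\times2$ block has complex eigenvalues whose only relevant invariant is the real product $n(n-2)$, so that no complex arithmetic survives into the final count.
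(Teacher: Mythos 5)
Your proposal is correct and reaches the same spectrum of $A$ as the paper, but by a genuinely different route at the key step. Both arguments share the framework: the transition digraph on $P_{n,2}$, the reduction of universal cycles to Eulerian tours (the paper's Proposition~\ref{prop1}), and the BEST/matrix--tree formula (Lemma~\ref{lem1}), after which everything hinges on the eigenvalues of the adjacency matrix $A$. The paper gets these by brute force: it counts walks of length at most $4$ between all seven types of ordered vertex pairs (Lemma~\ref{lem2}), derives the identity $A^{4}+A^{3}+(n-3)A^{2}-A-(n-2)I=(n-2)(n-3)J$, concludes that the non-Perron eigenvalues lie among $\{1,-1,p,q\}$ with $p,q$ roots of $\lambda^{2}+\lambda+n-2$, and then pins down the multiplicities by solving a linear system coming from the traces of $A$, $A^{2}$, $A^{3}$ (Lemma~\ref{lem3}). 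You instead exhibit explicit $A$-invariant subspaces --- the separable functions $\phi(a)+\psi(b)$, carrying $n-2$ once and the $2\times 2$ block with characteristic polynomial $\mu^{2}+\mu+(n-2)$ with multiplicity $n-1$, and the symmetric and antisymmetric functions with vanishing row sums, on which $A$ acts as $-1$ and $+1$ respectively --- and conclude by distinctness of eigenvalues plus the dimension count $1+2(n-1)+\tfrac{n(n-3)}{2}+\tfrac{(n-1)(n-2)}{2}=n(n-1)$. I checked the details: the $2\times 2$ block $\bigl(\begin{smallmatrix}0&-1\\ n-2&-1\end{smallmatrix}\bigr)$ is right, the row-sum-zero computation $(Af)(a,b)=-f(b,a)$ is right, the stated dimensions are correct (they rest on the degree map of $K_n$ having rank $n$ on symmetric weightings and rank $n-1$ on flows, which holds for $n\ge 3$ and is worth one sentence in a final write-up), and the Laplacian product assembles to \eqref{equ2} exactly as in the paper, including the exponent identity $\tfrac{n(n-3)}{2}-1=\tfrac{(n-1)(n-2)}{2}-2$. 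Your approach buys structural insight --- it explains \emph{why} the multiplicities are what they are and avoids both the seven-case walk analysis and the trace computation --- whereas the paper's walk-counting method is more mechanical and, as its conclusion notes, is intended to extend to general $k$ where guessing the invariant subspaces would be harder.
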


The rest of this paper is presented as follows. In Section~\ref{sec:2} we collect preliminary notions and known results.
Related lemmas and detailed proofs of Theorems \ref{the1} and \ref{the2} are provided in Sections 3. We conclude this paper in the last section.

\section{Preliminaries}\label{sec:2}

Let us recall some definitions and concepts for digraphs.
For a vertex $v$ in a digraph, its out-degree is the number of arcs with initial vertex $v$, and its in-degree is the number of arcs with final vertex $v$.
A digraph is balanced if for each vertex, its in-degree and out-degree are the same.
It is well-known that a digraph contains an Eulerian tour if and only if the digraph is connected and balanced (see, for example, \cite[Theorem 1.7.2]{Bang2009}).

Given a digraph $D$, its adjacency matrix is the (0,1)-matrix $A=(a_{i,j})$
where $a_{i,j}=1$ if $v_{i}v_{j}$ is an arc of $D$, and $a_{i,j}=0$ otherwise.
Let $T$ be the diagonal matrix of the vertex out-degrees.
The Laplacian matrix of $D$ is defined as $L=T-A$.
The eigenvalues of $L$ are called the Laplacian eigenvalues of $D$.
Obviously, the row sum of $L$ is zero, which implies that $0$ is an eigenvalue of $L$ with respect to the eigenvector $\mathbf{1}$.

In order to count the number of distinct universal cycles for $P_{n,k}$, $k>1$, we define a transition digraph.
Let $D$ be a digraph with vertex set $P_{n,k-1}$.
The arcs of $D$ satisfy the following rule: for any two vertices $i_{1}i_{2}\cdots i_{k-1}$ and $j_{1}j_{2}\cdots j_{k-1}$,
there is an arc from $i_{1}i_{2}\cdots i_{k-1}$ to $j_{1}j_{2}\cdots j_{k-1}$ if and only if
$i_{s}=j_{s-1}$ for $2\leq s\leq k-1$, and $i_{1}\neq j_{k-1}$. Such a digraph is called the transition digraph of $P_{n,k}$.
Let $uv$ be an arc in $D$ with initial vertex $u$ and final vertex $v$.
If $u=i_{1}i_{2}\cdots i_{k-1}$, then $v=i_{2}i_{3}\cdots i_{k-1}i_{k}$, where $i_{k}\in [n]\backslash \{i_{1},i_{2},\ldots,i_{k-1}\}$,
and so the arc $uv$ may be regarded as the $k$-permutation $i_{1}i_{2}\cdots i_{k-1}i_{k}$.
On the other hand, any $k$-permutation $i_{1}i_{2}\cdots i_{k-1}i_{k}$ in $P_{n,k}$ is represented by an arc with initial vertex $i_{1}i_{2}\cdots i_{k-1}$
and final vertex $i_{2}i_{3}\cdots i_{k-1}i_{k}$. In \cite{Jackson1993}, Jackson showed such transition digraph is balanced and connected.  One can see that any Eulerian tour in this transition digraph corresponds to a universal cycle for $P_{n,k}$, which leads to the following proposition directly.

\begin{proposition}\label{prop1}
  The number of distinct universal cycles for $P_{n,k}$ is equal to the number of Eulerian tours of its transition digraph.
\end{proposition}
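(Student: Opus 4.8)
The plan is to exhibit an explicit bijection between universal cycles for $P_{n,k}$ and Eulerian tours of the transition digraph $D$, since both are counted as cyclic objects: a universal cycle is a cyclic arrangement, and an Eulerian tour is determined only up to the choice of its starting arc. The key structural fact, immediate from the construction above, is that $D$ has exactly $N:=|P_{n,k}|=n!/(n-k)!$ arcs, one for each $k$-permutation. So an Eulerian tour and a universal cycle both have ``length'' $N$, and the correspondence should identify traversing an arc with reading off one more entry of the cycle.

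First I would pass from a universal cycle to a tour. Let $C=(x_1,x_2,\ldots,x_N)$ be a universal cycle, with indices read cyclically modulo $N$, and for each $i$ set $v_i=x_i x_{i+1}\cdots x_{i+k-2}$, the window of $k-1$ consecutive entries beginning at position $i$. Since every length-$k$ window is a $k$-permutation, every length-$(k-1)$ window consists of distinct entries, so each $v_i$ is a genuine vertex of $D$. Moreover $(v_i,v_{i+1})$ obeys the adjacency rule defining $D$: the last $k-2$ entries of $v_i$ equal the first $k-2$ entries of $v_{i+1}$, and $x_i\neq x_{i+k-1}$ because $x_i\cdots x_{i+k-1}$ is a $k$-permutation. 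Hence $v_iv_{i+1}$ is the arc labelled by the $k$-permutation $x_i\cdots x_{i+k-1}$, and $v_1,v_2,\ldots,v_N,v_1$ is a closed walk; as each $k$-permutation occurs exactly once in $C$, this walk uses each arc exactly once, so it is an Eulerian tour.

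Next I would reverse the construction. Given an Eulerian tour, list its arcs cyclically as $a_1,\ldots,a_N$ and let $p_i=p_{i,1}p_{i,2}\cdots p_{i,k}$ be the $k$-permutation labelling $a_i$. In any tour the head of $a_i$ equals the tail of $a_{i+1}$, which by the definition of $D$ forces $p_{i+1,\ell}=p_{i,\ell+1}$ for $1\le\ell\le k-1$. Setting $x_i=p_{i,1}$ and iterating this relation gives $x_{i+j}=p_{i,j+1}$, so the window $x_ix_{i+1}\cdots x_{i+k-1}$ equals $p_{i,1}\cdots p_{i,k}=p_i$. Thus $C=(x_1,\ldots,x_N)$ is a cyclic sequence whose length-$k$ windows are exactly the labels of the arcs; since the tour meets each arc once, each $k$-permutation appears exactly once, so $C$ is a universal cycle.

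Finally I would verify that these two maps are mutually inverse and compatible with the natural equivalences on each side: rotating $C$ by one position corresponds precisely to advancing the starting arc of the tour by one, so the correspondence descends to a bijection between equivalence classes. I expect the two constructions themselves to be routine verifications of the overlap conditions; the only point demanding genuine care is this last bookkeeping step, namely matching the ``up to rotation'' identification of universal cycles with the ``up to starting arc'' identification of Eulerian tours, so that the bijection neither double-counts nor omits any class and the two enumerations coincide exactly.
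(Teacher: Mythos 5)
Your proposal is correct and follows exactly the route the paper takes: the paper simply asserts that Eulerian tours of the transition digraph correspond to universal cycles (the sentence preceding the proposition is its entire ``proof''), and your write-up supplies the explicit two-way construction and the rotation bookkeeping that the authors leave to the reader. No discrepancy with the paper's argument.
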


This proposition implies that it is sufficient to consider the number of Eulerian tours in the transition digraph of $P_{n,k}$.
In the following, we establish the formulae for the number of distinct universal cycles.
Let $D$ be a connected balanced digraph, and let $\epsilon(D)$ denote the number of Eulerian tours of $D$.
 We use $d^{+}(v)$ to denote the out-degree of the vertex $v$.

\begin{lemma}{\rm(\cite{Stanley2013})}\label{lem1}
  Let $D$ be a connected balanced digraph with vertex set $V$.
  If the Laplacian eigenvalues of $D$ are $\mu_{1}\geq \mu_{2}\geq \cdots\geq \mu_{|V|-1}>\mu_{|V|}=0$, then
\begin{equation}\label{equ:1}
  \epsilon(D)=\frac{1}{|V|}\mu_{1}\mu_{2}\cdots\mu_{|V|-1}\prod_{v\in V}(d^{+}(v)-1)!.
\end{equation}
\end{lemma}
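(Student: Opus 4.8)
The plan is to assemble the formula from two classical ingredients---the BEST theorem and the directed Matrix--Tree theorem---and then convert the resulting cofactor into the stated product of nonzero Laplacian eigenvalues via the characteristic polynomial of $L$.

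First I would invoke the BEST theorem, which asserts that for a connected balanced digraph $D$ one has
\[
\epsilon(D)=\tau_{w}(D)\prod_{v\in V}\bigl(d^{+}(v)-1\bigr)!,
\]
where $\tau_{w}(D)$ counts the spanning arborescences of $D$ directed toward a fixed root $w$. A standard property of balanced digraphs is that $\tau_{w}(D)$ is independent of the choice of $w$; I would record this and write the common value as $\tau$.

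Next I would evaluate $\tau$ by the directed Matrix--Tree theorem. With $L=T-A$ as defined in Section~\ref{sec:2}, the determinant of the matrix $L_{(w)}$ obtained by deleting the row and column indexed by $w$ equals the number of arborescences rooted at $w$, so $\tau=\det L_{(w)}$. Because $D$ is connected and balanced, $L$ has rank $|V|-1$ and both its row sums and its column sums vanish; a standard consequence is that the adjugate of $L$ is a scalar multiple of the all-ones matrix, whence all principal $(|V|-1)$-minors $\det L_{(i)}$ coincide and equal $\tau$. In particular $\tau=\frac{1}{|V|}\sum_{i}\det L_{(i)}$.

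Finally I would identify $\sum_{i}\det L_{(i)}$ with the eigenvalue product. Writing the characteristic polynomial as $\det(xI-L)=x\prod_{i=1}^{|V|-1}(x-\mu_{i})$, since $\mu_{|V|}=0$, and comparing the coefficient of $x$ on both sides---on one side it is $(-1)^{|V|-1}\prod_{i=1}^{|V|-1}\mu_{i}$, on the other it is $(-1)^{|V|-1}$ times the sum of the principal $(|V|-1)$-minors of $L$---gives $\sum_{i}\det L_{(i)}=\prod_{i=1}^{|V|-1}\mu_{i}$. Therefore $\tau=\frac{1}{|V|}\mu_{1}\cdots\mu_{|V|-1}$, and substituting into the BEST formula yields \eqref{equ:1}. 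I expect the main obstacle to be the middle step: verifying that connectedness and balancedness force all principal cofactors of $L$ to coincide, and matching the orientation convention of the Matrix--Tree theorem (arcs directed toward the root) to the arborescences counted by the BEST theorem. Both points reduce to the observation that $L^{\top}$ is the Laplacian of the digraph with all arcs reversed, so for a balanced digraph the in- and out-arborescence counts agree; this is precisely where the hypothesis that $D$ is balanced is essential.
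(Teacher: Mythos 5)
The paper gives no proof of this lemma---it is quoted verbatim from \cite{Stanley2013}---and your argument is correct and follows exactly the chain of reasoning in that source: the BEST theorem, the directed Matrix--Tree theorem, the equality of all principal cofactors of $L$ for a connected balanced digraph (via the adjugate being a multiple of the all-ones matrix), and the identification of $\sum_i \det L_{(i)}$ with $\mu_1\cdots\mu_{|V|-1}$ by comparing coefficients of $x$ in the characteristic polynomial. The orientation subtlety you flag is real but resolves as you say, so nothing further is needed.
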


According to Lemma \ref{lem1}, to count the number of universal cycles for $P_{n,k}$, it is enough to compute the corresponding Laplacian eigenvalues. In the following section, we will show how to determine them by a  simple method.

\section{Enumeration formulae for $k=2$ and $3$}\label{sec:3}

In this section we will count the number of universal cycles for $P_{n,2}$ and $P_{n,3}$, and the proofs of Theorems \ref{the1} and \ref{the2} are provided respectively.

Firstly we give the proof of Theorem \ref{the1} about the number of universal cycles for $P_{n,2}$.

\medskip
\noindent \textbf{Proof of Theorem \ref{the1}.} Let $D$ be the transition diagraph of $P_{n,2}$.
The vertex set of $D$ is $[n]$. For any two distinct vertices $i$ and $j$, by the definition of transition diagraph, $ij$ is an arc of $D$.
Note also that the out-degree of each vertex equals $n-1$. Thus, the Laplacian matrix of $D$ is
\begin{equation}\label{equ:2}
\begin{bmatrix}
  n-1 &-1 & -1 & \cdots & -1\\
  -1 & n-1 & -1 & \cdots & -1\\
  -1 & -1 & n-1 & \cdots & -1\\
  \vdots & \vdots & \vdots & \ddots & \vdots\\
  -1 & -1 & -1 & \cdots & n-1
\end{bmatrix}=nI-J,
\end{equation}
where $J$ is the $n\times n$ matrix of all ones, and $I$ is the $n\times n$ identity matrix. It is well known that the eigenvalues of $J$
are $n$ (once) and 0 ($n-1$ times) \cite{Stanley2013}. A simple calculation shows that the eigenvalues of this Laplacian matrix are
$$\underbrace{n,n,\ldots,n}_{n-1},0.$$
According to Lemma \ref{lem1}, we obtain that
$$\epsilon(D)=\frac{1}{n}n^{n-1}[(n-2)!]^{n}=n^{n-2}[(n-2)!]^{n}.$$
Thus, it follows from Proposition \ref{prop1} that there are $n^{n-2}[(n-2)!]^{n}$ distinct universal cycles for $P_{n,2}$.
\hspace*{\fill}$\Box$
\medskip

Next we consider the universal cycles for $P_{n,3}$.  Let $D$ be the transition diagraph of $P_{n,3}$ with adjacency matrix $A$.
One can see that $D$ is balanced, and the out-degree (in-degree) of any vertex equals $n-2$.
The following lemma presents the property of the adjacency matrix $A$.

\begin{lemma}\label{lem2}
  Let $D$ be the transition diagraph of $P_{n,3}$ with adjacency matrix $A$. Then
\begin{equation}\label{equ:3}
 A^{4}+A^{3}+(n-3)A^{2}-A-(n-2)I=(n-2)(n-3)J,
\end{equation}
where $J$ is the $n(n-1)\times n(n-1)$ matrix of all ones, and $I$ is the $n(n-1)\times n(n-1)$ identity matrix.
\end{lemma}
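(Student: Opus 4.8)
The plan is to read the entries of each power $A^m$ as walk counts in $D$ and then exploit the strong symmetry of the problem to reduce the matrix identity (\ref{equ:3}) to a handful of scalar checks. The starting observation is that $(A^m)_{uv}$ equals the number of directed walks of length $m$ from $u$ to $v$, and that a walk from $u=i_1i_2$ can be encoded as a word $s_0s_1\cdots s_{m+1}$ over $[n]$ with $(s_0,s_1)=(i_1,i_2)$ and the vertex at step $t$ equal to $s_ts_{t+1}$; the arc rule of the transition digraph forces every three consecutive letters $s_t,s_{t+1},s_{t+2}$ to be pairwise distinct. Thus $(A^m)_{(a,b),(c,d)}$ counts such words with $(s_0,s_1)=(a,b)$ and $(s_m,s_{m+1})=(c,d)$. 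Since relabeling $[n]$ is an automorphism of $D$, this count depends only on the \emph{overlap type} of the ordered pairs $(a,b)$ and $(c,d)$, i.e. on which coincidences among $c=a,\ c=b,\ d=a,\ d=b$ hold (recall $a\ne b$ and $c\ne d$). There are exactly seven types: $v=u$, $v=(b,a)$, and five partial/no-overlap cases. As $I,J,A,A^2,A^3,A^4$ are all invariant under this action, the identity (\ref{equ:3}) is equivalent to seven scalar equations, one per representative pair; note that $I$ contributes only in the diagonal type, while $J$ contributes the constant $(n-2)(n-3)$ in every type.

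Before computing, I would record a consistency check on the all-ones direction. Since $D$ is balanced of out-degree $n-2$ we have $A\mathbf{1}=(n-2)\mathbf{1}$ and $J\mathbf{1}=n(n-1)\mathbf{1}$, so evaluating both sides of (\ref{equ:3}) on $\mathbf{1}$ reduces to $p(n-2)=n(n-1)(n-2)(n-3)$, where $p(x)=x^4+x^3+(n-3)x^2-x-(n-2)$; a direct factorization confirms this, which pins down that the coefficients are correct (though the full entrywise claim still needs to be verified). The low powers are then immediate from the word description: $A$ is supported exactly on the type $c=b,\ d\notin\{a,b\}$, a one-free-letter count gives $A^2_{(a,b),(c,d)}=[c\notin\{a,b\}]\,[d\ne b]$, and $A^3_{(a,b),(c,d)}$ equals $[c\ne b]$ times the number of admissible middle letters, hence $n-|\{a,b,c,d\}|$ when $c\ne b$ and $0$ otherwise.

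The only substantial computation is $A^4$. Writing the word as $a,b,x,y,c,d$, the four windows force $x\notin\{a,b\}$, $y\notin\{b,x\}$, $x\ne c$, $y\ne c$, and $y\ne d$, so $(A^4)_{(a,b),(c,d)}$ is the number of pairs $(x,y)$ meeting these coupled constraints. I would evaluate it in each of the seven types by a short case split on whether the free letters collide with $c$ or $d$; for example, in the all-distinct type one separates $x=d$ from $x\notin\{a,b,c,d\}$ and gets $(n-3)+(n-4)^2$. This coupling of the two free letters, which prevents a naive product count, is the main obstacle; everything else is bookkeeping. Finally I substitute the computed values of $A,A^2,A^3,A^4$ into $p(A)$ type by type and verify each yields $(n-2)(n-3)$: on the diagonal the term $-(n-2)I$ is exactly what cancels the extra $+(n-2)$ produced by $A^3$, and the off-diagonal types close up just as in the all-distinct case. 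This establishes (\ref{equ:3}).
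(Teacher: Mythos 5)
Your proposal is correct and follows essentially the same route as the paper: both reduce the identity to seven scalar checks indexed by the overlap type of the ordered pairs $u=(a,b)$ and $v=(c,d)$, count walks of lengths $1$ through $4$ in each type (with identical values, e.g. $(n-3)+(n-4)^2$ for $A^4$ in the all-distinct case), and verify the polynomial entrywise. The only cosmetic differences are that you verify the given coefficients (adding a useful sanity check on $\mathbf{1}$ and an explicit appeal to the relabeling symmetry) whereas the paper solves a small linear system to derive them.
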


\begin{proof}
Let $\tau_{\ell}(u,v)$ denote the number of walks from $u$ to $v$ in $D$ of length $\ell\geq 1$.
In the following, we determine $\tau_{\ell}(u,v)$ by distinguishing seven cases for any ordered pair $(u,v)$.

\medskip

\noindent\textbf{Case 1:} $u=ab$ and $v=ab$, where $a,b\in [n]$.

\medskip

Obviously, $\tau_{1}(ab,ab)=\tau_{2}(ab,ab)=0$. The 3-walk from $ab$ to $ab$ may be indicated as
$$ab\longrightarrow bx\longrightarrow xa \longrightarrow ab$$
where $x\in [n]\backslash \{a,b\}$. Thus, $\tau_{3}(ab,ab)=n-2$. Similarly, the 4-walk from $ab$ to $ab$ is presented as
$$ab\longrightarrow bx\longrightarrow xy\longrightarrow ya \longrightarrow ab$$
where $x,y\in [n]\backslash \{a,b\}$ and $x\neq y$. Then $\tau_{4}(ab,ab)=(n-2)(n-3)$.

\medskip

\noindent\textbf{Case 2:} $u=ab$ and $v=ba$, where $a,b\in [n]$.

\medskip

It is easy to see that there is not walk from $ab$ to $ba$ of length $\ell\leq 3$, and so $\tau_{1}(ab,ba)=\tau_{2}(ab,ba)=\tau_{3}(ab,ba)=0$.
The 4-walk from $ab$ to $ba$ is expressed as
$$ab\longrightarrow bx\longrightarrow xy\longrightarrow yb \longrightarrow ba.$$
Thus, $x,y\in [n]\backslash \{a,b\}$ and $x\neq y$, which yielding $\tau_{4}(ab,ba)=(n-2)(n-3)$.

\medskip

\noindent\textbf{Case 3:} $u=ab$ and $v=ac$, where $a,b,c\in [n]$.

\medskip

There is not walk from $ab$ to $ac$ of length $\ell\leq 2$, hence $\tau_{1}(ab,ac)=\tau_{2}(ab,ac)=0$. The 3-walk from $ab$ to $ac$ may be presented as
$$ab\longrightarrow bx\longrightarrow xa \longrightarrow ac$$
where $x$ belongs to $[n]\backslash \{a,b,c\}$. Then $\tau_{3}(ab,ac)=n-3$. The 4-walk from $ab$ to $ac$ is
$$ab\longrightarrow bx\longrightarrow xy\longrightarrow ya \longrightarrow ac.$$
Obviously, $x\in [n]\backslash \{a,b\}$, $y\in [n]\backslash \{a,b,c\}$ and $x\neq y$. Thus, $\tau_{4}(ab,ac)=n-3+(n-3)(n-4)=(n-3)^{2}$.

\medskip

\noindent\textbf{Case 4:} $u=ab$ and $v=ca$, where $a,b,c\in [n]$.

\medskip

In this case, $uv$ is not an arc in $D$, thus $\tau_{1}(ab,ca)=0$. Moreover, $ab\longrightarrow bc\longrightarrow ca$ is the unique 2-walk from $ab$ to $ca$,
and so $\tau_{2}(ab,ca)=1$. Since the 3-walk from $ab$ to $ca$ is
$$ab\longrightarrow bx\longrightarrow xc \longrightarrow ca,$$
it means that $x\in [n]\backslash \{a,b,c\}$. Therefore, $\tau_{3}(ab,ca)=n-3$. Let
$$ab\longrightarrow bx\longrightarrow xy\longrightarrow yc \longrightarrow ca$$
be the 4-walk from $ab$ to $ca$. It follows that $x,y\in [n]\backslash \{a,b,c\}$ and $x\neq y$, hence $\tau_{4}(ab,ca)=(n-3)(n-4)$.

\medskip

\noindent\textbf{Case 5:} $u=ab$ and $v=bc$, where $a,b,c\in [n]$.

\medskip

Clearly, $uv$ is an arc in $D$, that is, $\tau_{1}(ab,bc)=1$. Since there is not walk from $ab$ to $bc$ of length 2 or 3, we have $\tau_{2}(ab,bc)=\tau_{3}(ab,bc)=0$.
The 4-walk from $ab$ to $bc$ is presented as
$$ab\longrightarrow bx\longrightarrow xy\longrightarrow yb \longrightarrow bc.$$
It follows that $x\in [n]\backslash \{a,b\}$, $y\in [n]\backslash \{b,c\}$ and $x\neq y$.
Therefore, $\tau_{4}(ab,bc)=(n-3)^{2}+(n-2)$.

\medskip

\noindent\textbf{Case 6:} $u=ab$ and $v=cb$, where $a,b,c\in [n]$.

\medskip

It is easy to see that the length of any walk from $ab$ to $cb$ is at least 3. Suppose that
$$ab\longrightarrow bx\longrightarrow xc \longrightarrow cb$$
is a 3-walk from $ab$ to $cb$. Thus, $x\in [n]\backslash \{a,b,c\}$, and so $\tau_{3}(ab,cb)=n-3$. Let
$$ab\longrightarrow bx\longrightarrow xy\longrightarrow yc \longrightarrow cb$$
be a 4-walk from $ab$ to $cb$. It follows that $x\in [n]\backslash \{a,b,c\}$, $y\in [n]\backslash \{b,c\}$ and $x\neq y$,
which implies that $\tau_{4}(ab,cb)=n-3+(n-3)(n-4)=(n-3)^{2}$.

\medskip

\noindent\textbf{Case 7:} $u=ab$ and $v=cd$, where $a,b,c,d\in [n]$.

\medskip

Since $uv$ is not an arc in $D$, $\tau_{1}(ab,cd)=0$. Note that $ab\longrightarrow bc\longrightarrow cd$ is the unique 2-walk from $ab$ to $cd$.
Hence $\tau_{2}(ab,cd)=1$. For any $x\in [n]\backslash \{a,b,c,d\}$, $ab\longrightarrow bx\longrightarrow xc \longrightarrow cd$ forms a 3-walk,
thus $\tau_{3}(ab,cd)=n-4$. The 4-walk from $ab$ to $cd$ is presented as
$$ab\longrightarrow bx\longrightarrow xy\longrightarrow yc \longrightarrow cd,$$
in which $x\in [n]\backslash \{a,b,c\}$, $y\in [n]\backslash \{b,c,d\}$ and $x\neq y$. It follows that $\tau_{4}(ab,cd)=n-3+(n-4)^{2}$.

\begin{table}[t]
\caption{Entries of $A, A^{2}, A^{3}$, $A^{4}$ and $A^{4}+A^{3}+(n-3)A^{2}-A$}\label{tab1}
\centering
\begin{tabular}{lccccc}
\toprule
 & $A$ & $A^{2}$ & $A^{3}$ & $A^{4}$ & $A^{4}+A^{3}+(n-3)A^{2}-A$ \\
\midrule
$(ab,ab)$-entry & 0 & 0 & $n-2$ & $(n-2)(n-3)$ & $(n-2)^{2}$\\
$(ab,ba)$-entry & 0 & 0 & 0 & $(n-2)(n-3)$ & $(n-2)(n-3)$\\
$(ab,ac)$-entry & 0 & 0 & $n-3$  & $(n-3)^{2}$ & $(n-2)(n-3)$\\
$(ab,ca)$-entry & 0 & 1 & $n-3$  & $(n-3)(n-4)$ & $(n-2)(n-3)$\\
$(ab,bc)$-entry & 1 & 0 & 0  & $(n-3)^{2}+(n-2)$ & $(n-2)(n-3)$\\
$(ab,cb)$-entry & 0 & 0 & $n-3$  & $(n-3)^{2}$ & $(n-2)(n-3)$\\
$(ab,cd)$-entry & 0 & 1 & $n-4$  & $(n-3)+(n-4)^{2}$ & $(n-2)(n-3)$\\
\bottomrule
\end{tabular}
\end{table}

Recall that the $(u,v)$-entry of the matrix $A^{\ell}$ is equal to $\tau_{\ell}(u,v)$.
According to the above cases, we determine the entries of matrices $A, A^{2}, A^{3}$ and $A^{4}$ respectively, as shown in Table \ref{tab1}.
Assume that the matrix $A$ satisfies the following equation
\[
\alpha_{0}I+\alpha_{1}A+\alpha_{2}A^{2}+\alpha_{3}A^{3}+\alpha_{4}A^{4}=J.
\]
By Table \ref{tab1}, we obtain the system of equations
\begin{equation*}
\left\{
             \begin{array}{lr}
             \alpha_{0}+\alpha_{3}(n-2)+\alpha_{4}(n-2)(n-3)=1,\\
             \alpha_{4}(n-2)(n-3)=1,\\
             \alpha_{3}(n-3)+\alpha_{4}(n-3)^{2}=1,\\
             \alpha_{2}+\alpha_{3}(n-3)+\alpha_{4}(n-3)(n-4)=1,\\
             \alpha_{1}+\alpha_{4}((n-3)^{2}+(n-2))=1,\\
             \alpha_{2}+\alpha_{3}(n-4)+\alpha_{4}((n-3)+(n-4)^{2})=1.
             \end{array}
\right.
\end{equation*}
The solution to this system of equations is given by
\begin{equation*}
\left\{
             \begin{array}{lr}
             \alpha_{0}=-\frac{1}{n-3},\\
            \alpha_{1}=-\frac{1}{(n-2)(n-3)},\\
             \alpha_{2}=\frac{1}{n-2},\\
             \alpha_{3}=\frac{1}{(n-2)(n-3)},\\
             \alpha_{4}=\frac{1}{(n-2)(n-3)}.
             \end{array}
\right.
\end{equation*}
Thus, it follows that
\[
-\frac{1}{n-3}I-\frac{1}{(n-2)(n-3)}A+\frac{1}{n-2}A^{2}+\frac{1}{(n-2)(n-3)}A^{3}+\frac{1}{(n-2)(n-3)}A^{4}=J,
\]
that is,
$$-(n-2)I-A+(n-3)A^{2}+A^{3}+A^{4}=(n-2)(n-3)J,$$
and Equation (\ref{equ:3}) has been deduced.
\end{proof}

\begin{lemma}\label{lem3}
Let $D$ be the transition diagraph of $P_{n,3}$ with adjacency matrix $A$. Then the eigenvalues of $A$ are
$$n-2,\underbrace{1,\ldots,1}_{\frac{(n-1)(n-2)}{2}},\underbrace{-1,\ldots,-1}_{\frac{n(n-3)}{2}},\underbrace{p\ldots,p}_{n-1},\underbrace{q\ldots,q}_{n-1},$$
where $p$ and $q$ are the roots of $\lambda^{2}+\lambda+n-2=0$.
\end{lemma}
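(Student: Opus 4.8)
The plan is to use the polynomial identity of Lemma~\ref{lem2} to restrict the possible eigenvalues, and then fix their multiplicities by trace conditions. Write $N=n(n-1)$ for the order of $A$. Since $D$ is balanced and $(n-2)$-regular, the all-ones vector $\mathbf{1}$ satisfies $A\mathbf{1}=(n-2)\mathbf{1}$ and $\mathbf{1}^{\mathsf T}A=(n-2)\mathbf{1}^{\mathsf T}$; hence $A$ leaves invariant both $\mathrm{span}(\mathbf{1})$ and the hyperplane $W=\{v:\mathbf{1}^{\mathsf T}v=0\}=\ker J$, and $\mathbb{C}^{N}=\mathrm{span}(\mathbf{1})\oplus W$. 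On $\mathrm{span}(\mathbf{1})$ the matrix acts as the scalar $n-2$, which accounts for one eigenvalue.

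First I would restrict Equation~(\ref{equ:3}) to $W$. For $v\in W$ we have $Jv=0$, so (\ref{equ:3}) yields $f(A)v=0$, where
\[
f(\lambda)=\lambda^{4}+\lambda^{3}+(n-3)\lambda^{2}-\lambda-(n-2).
\]
Thus every eigenvalue of $A|_{W}$ is a root of $f$. Testing $\lambda=\pm1$ and dividing out gives the factorisation
\[
f(\lambda)=(\lambda-1)(\lambda+1)\bigl(\lambda^{2}+\lambda+n-2\bigr),
\]
so the eigenvalues on $W$ lie in $\{1,-1,p,q\}$, with $p,q$ the roots of $\lambda^{2}+\lambda+n-2=0$. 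For $n\geq4$ these four numbers are pairwise distinct (neither $\pm1$ is a root of $\lambda^{2}+\lambda+n-2$, and its discriminant $9-4n$ is negative, so $p\neq q$); hence $f$ is squarefree, $A|_{W}$ is diagonalisable, and so is $A$, with spectrum contained in $\{n-2,1,-1,p,q\}$. Since $f(n-2)\neq0$ for $n\geq4$, the value $n-2$ occurs with multiplicity exactly one.

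Next I would determine the remaining multiplicities $m_{1},m_{-1},m_{p},m_{q}$ from trace data. As $A$ is real and the discriminant is negative, $p$ and $q$ are complex conjugates, forcing $m_{p}=m_{q}=:m$. The case analysis already shows the $(ab,ab)$-entries of $A$ and of $A^{2}$ vanish, so $\mathrm{tr}(A)=0$ and $\mathrm{tr}(A^{2})=0$. Using $p+q=-1$ and $p^{2}+q^{2}=(p+q)^{2}-2pq=5-2n$, together with the order count, I would assemble the system
\[
m_{1}+m_{-1}+2m=N-1,\qquad (n-2)+m_{1}-m_{-1}-m=0,\qquad (n-2)^{2}+m_{1}+m_{-1}+(5-2n)m=0.
\]
Solving yields $m=n-1$, and then $m_{1}=\tfrac{(n-1)(n-2)}{2}$ and $m_{-1}=\tfrac{n(n-3)}{2}$, which is exactly the claimed spectrum.

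The routine parts are the factorisation of $f$ and the solution of the $3\times3$ linear system. The step requiring the most care is justifying that the trace identities genuinely compute \emph{algebraic} multiplicities over the listed eigenvalues: this rests on first establishing the invariant splitting $\mathbb{C}^{N}=\mathrm{span}(\mathbf{1})\oplus W$, the squarefreeness of $f$ (hence diagonalisability), and the conjugacy $m_{p}=m_{q}$. Once those structural facts are secured, the two vanishing traces and the dimension count close the argument.
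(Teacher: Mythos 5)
Your proposal is correct and follows essentially the same route as the paper: both use the identity of Lemma~\ref{lem2} to confine the spectrum to $\{n-2,1,-1,p,q\}$ via the factorisation $f(\lambda)=(\lambda-1)(\lambda+1)(\lambda^{2}+\lambda+n-2)$, and then determine the multiplicities from trace conditions. The only differences are in the details: you restrict to the invariant hyperplane $\ker J$ and close the system with $\mathrm{tr}(A^{2})=0$ plus the conjugate-pair symmetry $m_{p}=m_{q}$, whereas the paper invokes the spectral mapping theorem directly and uses $\mathrm{tr}(A^{3})=n(n-1)(n-2)$ as its third equation; both yield the same multiplicities, and your explicit treatment of diagonalisability and of the invariant splitting is a slightly more careful rendering of the same argument.
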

\begin{proof}
Here we define a function $\phi(x)=x^{4}+x^{3}+(n-3)x^{2}-x-(n-2)$.
If the eigenvalues of $A$ are
$$\lambda_{1},\lambda_{2},\ldots,\lambda_{i},\ldots,\lambda_{n(n-1)},$$
then the eigenvalues of $A^{4}+A^{3}+(n-3)A^{2}-A-(n-2)I$ are
\begin{equation*}\label{eq1}
\phi(\lambda_{1}),\phi(\lambda_{2}),\ldots, \phi(\lambda_{i}),\ldots, \phi(\lambda_{n(n-1)}),
\end{equation*}
which will be the eigenvalues of $(n-2)(n-3)J$
\begin{equation*}\label{eq2}
  n(n-1)(n-2)(n-3),\underbrace{0,0,\ldots,0}_{n(n-1)-1},
\end{equation*}
according to Lemma \ref{lem2}.

Recall that the out-degree of any vertex in $D$ is equal to $n-2$, that is, the row sum of $A$ is $n-2$.
Let $\mathbf{1}$ be a column vector of all ones. It is obvious that $A\mathbf{1}=(n-2)\mathbf{1}$, which implies that $n-2$ is an eigenvalue of $A$.
Moreover, since $\phi(n-2)=n(n-1)(n-2)(n-3)$, the multiplicity of $n-2$ as an eigenvalue of $A$ is 1.
Therefore, the remaining eigenvalues of $A$ are the roots of $\phi(\lambda)=0$, i.e.,
$$\lambda^{4}+\lambda^{3}+(n-3)\lambda^{2}-\lambda-(n-2)=(\lambda-1)(\lambda+1)(\lambda^{2}+\lambda+n-2)=0.$$
So we can deduce that the characteristic polynomial of $A$ is
$$|\lambda I-A|=(\lambda-n+2)(\lambda-1)^{s_{1}}(\lambda+1)^{s_{2}}(\lambda^{2}+\lambda+n-2)^{s_{3}},$$
where $s_{1},s_{2},s_{3}\geq0$ and $s_1+s_2+2s_3=n(n-1)-1$. In other words, the eigenvalues of $A$ are
$$n-2,\underbrace{1,\ldots,1}_{s_{1}},\underbrace{-1,\ldots,-1}_{s_{2}},\underbrace{p\ldots,p}_{s_{3}},\underbrace{q\ldots,q}_{s_{3}},$$
where $p$ and $q$ are the roots of $\lambda^{2}+\lambda+n-2=0$. From Table \ref{tab1},
we obtain that the traces of $A, A^{2}$ and $A^{3}$ are $0,0$ and $n(n-1)(n-2)$, respectively.
Using the relationship between eigenvalues and trace of a matrix, we obtain
\begin{equation}\label{eq3}
\left\{
             \begin{array}{lr}
             n-2+s_{1}-s_{2}+s_{3}(p+q)=0,\\
             (n-2)^{2}+s_{1}+s_{2}+s_{3}(p^{2}+q^{2})=0,\\
             (n-2)^{3}+s_{1}-s_{2}+s_{3}(p^{3}+q^{3})=n(n-1)(n-2).
             \end{array}
\right.
\end{equation}
Since $p$ and $q$ are the roots of $\lambda^{2}+\lambda+n-2=0$, it follows that
\begin{equation}\label{eq4}
\left\{
             \begin{array}{l}
             p+q=-1,\\
             p^{2}+q^{2}=(p+q)^{2}-2pq=-2n+5,\\
             p^{3}+q^{3}=(p+q)(p^{2}+q^{2}-pq)=3n-7.
             \end{array}
\right.
\end{equation}
Combining (\ref{eq3}) and (\ref{eq4}), we obtain that $s_{1}=(n-1)(n-2)/2$, $s_{2}=n(n-3)/2$ and $s_{3}=n-1$, and the result follows.
\end{proof}

Now, we provide the proof of Theorem \ref{the2}.

\medskip
\noindent \textbf{Proof of Theorem \ref{the2}.} Let $D$ be the transition diagraph of $P_{n,3}$ with adjacency matrix $A$ and Laplacian matrix $L$. Since the out-degree of any vertex in $D$ is $(n-2)$,
we have $L=(n-2)I-A$. Thus, it follows from Lemma \ref{lem3} that the eigenvalues of $L$ are
$$0,\underbrace{n-3,\ldots,n-3}_{\frac{(n-1)(n-2)}{2}},\underbrace{n-1,\ldots,n-1}_{\frac{n(n-3)}{2}},\underbrace{n-2-p\ldots,n-2-p}_{n-1},
\underbrace{n-2-q\ldots,n-2-q}_{n-1},$$
where $p$ and $q$ are the roots of $\lambda^{2}+\lambda+n-2=0$. Since $pq=n-2$ and $p+q=-1$, we have
\[
(n-2-p)(n-2-q)=(n-2)^{2}-(p+q)(n-2)+pq=n(n-2).
\]
According to Lemma \ref{lem1}, we obtain that
\begin{eqnarray*}
\epsilon(D)&=&\frac{1}{n(n-1)}(n-3)^{\frac{(n-1)(n-2)}{2}}(n-1)^{\frac{n(n-3)}{2}}[n(n-2)]^{n-1}[(n-3)!]^{n(n-1)}\\
&=&(n-3)^{\frac{(n-1)(n-2)}{2}}(n-2)^{n-1}(n-1)^{\frac{(n-1)(n-2)}{2}-2}n^{n-2}[(n-3)!]^{n(n-1)}.
\end{eqnarray*}
The number of distinct universal cycles for $P_{n,3}$ can be deduced directly based on Proposition \ref{prop1}.\hspace*{\fill}$\Box$
\medskip

\section{Conclusions}

By discussing the entries of powers of adjacency matrix $A$, we construct a polynomial of $A$, and then the eigenvalues of $A$ can be provided. According to this new method, enumerating results of universal cycles for $P_{n,2}$ and $P_{n,3}$ are proposed precisely. This new method can be applied to general case by analyzing the entries of $A^i$.

\section*{Acknowledgements}
This work was supported by the National Natural Science Foundation of China (Nos. 61772476 and 12001498).



\begin{thebibliography}{99}

\bibitem{Bang2009} J. Bang-Jensen and G. Gutin, {\it Digraphs: Theory, Algorithms and Applications}, Springer Monographs in Mathematics, 2nd ed., Springer-Verlag, London, 2009.

\bibitem{Bruijn46}
	N.~G. de~Bruijn, ``A combinatorial problem,'' \emph{Proceedings of the Section of Sciences of the Koninklijke Nederlandse
		Akademie van Wetenschappen te Amsterdam}, 49 (1946) 758--764.

\bibitem{Chung1992} F. Chung, P. Diaconis, and R. Graham, ``Universal cycles for combinatorial structures,'' {\it Discrete Mathematics}, 110 (1992) 43--59.

\bibitem{Gabric2020}
	D.~Gabric, J.~Sawada, A.~Williams, and D.~Wong, ``A successor rule framework
	for constructing $k$-ary de Bruijn sequences and universal cycles,''
	\emph{IEEE Transactions on Information Theory}, 66 (2020) 679--687.

\bibitem{Holroyd2012} A. E. Holroyd, F. Ruskey and A. Williams, ``Shorthand universal cycles for permutations,'' {\it Algorithmica},  64 (2012) 215--245.

\bibitem{Jackson1993} B. Jackson, ``Universal cycles of $k$-subsets and $k$-permutations,'' {\it Discrete Mathematics}, 117 (1993) 141--150.

\bibitem{Jackson2009} B. Jackson, B. Stevens, and G. Hurlbert, ``Research problems on Gray codes and universal cycles,'' {\it Discrete Mathematics}, 309 (2009) 5341--5348.

\bibitem{Johnson2009} R. Johnson, ``Universal cycles for permutations,'' {\it Discrete Mathematics}, 309 (2009) 5264--5270.

\bibitem{Knuth2005} D.E. Knuth, {\it The Art of Computer Programming, Volume 4, Generating All Tuples and Permutations},
Fascicle 2, Addison-Wesley, 2005.

\bibitem{Ruskey2010} F. Ruskey and A. Williams, ``An explicit universal cycle for the $(n-1)$-permutations of an $n$-set,'' {\it ACM
Transactions on Algorithms}, 6-3 (2010) article 45: 1--12.

\bibitem{Sedgewick1977} R. Sedgewick, ``Permutation Generation Methods,'' {\it Computing Surveys}, 9 (1977) 137--164.

\bibitem{Stanley2013} R.P. Stanley, {\it Algebraic Combinatorics}, Springer-Verlag, New York, 2013.
\end{thebibliography}
\end{document}